\newtheorem{thm}{Theorem}
\newtheorem{lem}[thm]{Lemma}
\newtheorem{defn}[thm]{Definition}
\newtheorem{prop}[thm]{Proposition}
\newcommand{\ff}[1]{\mathbb{F}_{#1}}
\newcommand{\GL}[2]{\textbf{GL}_{#1}(#2)}
\newcommand{\G}{\mathcal{G}}
\begin{document}

\title{A note on the automorphism group of Schubert varieties}
\author{Fernando L. ~Pi\~nero}

\maketitle

\begin{abstract}

In \cite{GK13}, the authors determined that the automorphisms of a Schubert divisor are those automorphisms which fix a particular subspace. In this work we extend those results to all Schubert varieties.  We study the Schubert conditions which define a Schubert variety and the action upon these conditions by the automorphism group of the Grassmannian variety. We concluthe that the automorphisms of the Grassmannian which map a Schubert variety to itself if and only if it fixes the subspaces which do not give redundant conditions used to define the Schubert variety. 

\end{abstract}



\section{Introduction}
In this article we let $V = \ff{q}^m$ with its usual $\ff{q}$--linear vector space structure. We denote by $[a] := \{0,1,2,\ldots, a\}$. We also consider a subset $\alpha \subseteq [m]$ as an ordered tuple. That is $\alpha = (a_1 < a_2 < \ldots < a_{\ell})$. 

\begin{defn}  A \emph{flag of $V$} is a sequence of nested subspaces $$\mathcal{A} := A_1 \subsetneq A_2 \subsetneq A_3 \subsetneq \cdots \subsetneq A_\ell \subseteq V.$$

For $\alpha = (a_1, a_2, \ldots, a_\ell)$ be a subset of $[m]$ we denote the flag $$\mathcal{A} := A_1 \subsetneq A_2 \subsetneq A_3 \subsetneq \cdots \subsetneq A_\ell \subseteq V$$ as an \emph{$\alpha$--flag} if $\dim A_i = a_i$.

\end{defn}

Note that in the case of an $\alpha$--flag, there exists a basis $\mathbf{a}_1, \mathbf{a}_2, \ldots, \mathbf{a}_{m}$ such that $A_i$ is spanned by $\{  \mathbf{a}_j \ | \ 1 \leq j \leq a_i \}.$

\begin{defn} \emph{The $\ell$--Grassmannian of $\ff{q}^m$} is the set of all subspaces of $\ff{q}^m$ whose dimension is $\ell$ that is: $$\G_{\ell, m} := \{ W \leq \ff{q}^m \ | \ \dim W = \ell \}.$$
\end{defn}

\begin{lem}
 A matrix $M \in \GL{m}{\ff{q}}$ acts on a row vector $\mathbf{x} \in \ff{q}^m$ by mapping $\mathbf{x}$ to the vector $\mathbf{x} M$. This action is extended to a subspace $W \leq \ff{q}^m$ as follows: If $W = \langle \mathbf{w}_1, \mathbf{w}_2, \ldots, \mathbf{w}_r \rangle$, then $M$ maps $W$ to $M(W) :=  \langle \mathbf{w}_1M, \mathbf{w}_2M, \ldots, \mathbf{w}_rM \rangle$.
\end{lem}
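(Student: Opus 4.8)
The plan is to observe that the rule $\mathbf{x}\mapsto \mathbf{x}M$ is an $\ff{q}$--linear bijection of $\ff{q}^m$ onto itself, with inverse $\mathbf{x}\mapsto \mathbf{x}M^{-1}$ (which exists since $M\in\GL{m}{\ff{q}}$), and then to deduce every asserted property from standard facts about linear maps. Concretely, I would first verify that the proposed rule is well defined, i.e.\ independent of the chosen spanning set: if $W=\langle \mathbf{w}_1,\ldots,\mathbf{w}_r\rangle=\langle \mathbf{v}_1,\ldots,\mathbf{v}_s\rangle$, then each $\mathbf{w}_i$ is an $\ff{q}$--linear combination of the $\mathbf{v}_j$, so by linearity of right multiplication by $M$ each $\mathbf{w}_iM$ lies in $\langle \mathbf{v}_1M,\ldots,\mathbf{v}_sM\rangle$, and symmetrically. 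Hence $\langle \mathbf{w}_1M,\ldots,\mathbf{w}_rM\rangle=\langle \mathbf{v}_1M,\ldots,\mathbf{v}_sM\rangle$, and this common subspace is exactly the set-theoretic image $\{\mathbf{x}M : \mathbf{x}\in W\}$, which manifestly depends only on $W$.

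Next I would record that $M(W)=\{\mathbf{x}M : \mathbf{x}\in W\}$ is itself a subspace of $\ff{q}^m$, being the image of the subspace $W$ under a linear map, and that $\dim_{\ff{q}} M(W)=\dim_{\ff{q}} W$ because right multiplication by $M$ is injective: $\mathbf{x}M=\mathbf{0}$ forces $\mathbf{x}=\mathbf{x}MM^{-1}=\mathbf{0}$. In particular $M$ carries $\G_{\ell, m}$ into (indeed onto) $\G_{\ell, m}$. Finally I would check compatibility with matrix multiplication and with the identity: $I_m(W)=W$ trivially, and for $M,N\in\GL{m}{\ff{q}}$ one has $(MN)(W)=\{\mathbf{x}(MN):\mathbf{x}\in W\}=\{(\mathbf{x}M)N:\mathbf{x}\in W\}=N(M(W))$, so the assignment is a (right) action of $\GL{m}{\ff{q}}$ on $\G_{\ell, m}$; precomposing with $M\mapsto M^{-1}$ turns it into a left action if that convention is preferred.

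There is essentially no hard step here: the only point that genuinely needs an argument is the well-definedness, and even that becomes immediate once $M(W)$ is identified with the image $\{\mathbf{x}M:\mathbf{x}\in W\}$. The one notational wrinkle worth flagging explicitly is that, because vectors are written as rows acting on the right, the natural composition law reads $(MN)(W)=N(M(W))$ rather than $(MN)(W)=M(N(W))$; I would state this clearly so that later references to ``the action of the automorphism group of the Grassmannian'' are unambiguous.
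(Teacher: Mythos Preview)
Your argument is correct and covers more than is strictly needed: well-definedness via the identification $M(W)=\{\mathbf{x}M:\mathbf{x}\in W\}$, dimension preservation from invertibility, and the right-action law $(MN)(W)=N(M(W))$ are all verified cleanly. The paper, however, gives no proof at all for this lemma; it is stated as a standard fact (really a definition of the induced action on subspaces) and left without justification. So your proposal is not so much a different route as a fleshed-out version of something the paper takes for granted; your remark about the right-versus-left convention is a genuinely useful addition that the paper omits.
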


\begin{lem}
 Let $\theta$ be a field automorphism of $\ff{q}$. This automorphism $\theta$ acts on the space $\ff{q}^m$ by mapping the vector $(x_1, x_2, \ldots, x_m) = \mathbf{x} \in \ff{q}^m$  to the vector  $\mathbf{x}^\theta := (\theta(x_1), \theta(x_2), \ldots,\theta(x_m)) \in \ff{q}^m$. This is extended to a subspace $W \leq \ff{q}^m$ as follows: If  $W = \langle \mathbf{w}_1, \mathbf{w}_2, \ldots, \mathbf{w}_r \rangle$, then the automorphism $\theta$ maps $W$ to $ W^\theta := \langle \mathbf{w}_1^\theta, \mathbf{w}_2^\theta, \ldots, \mathbf{w}_r^\theta  \rangle$.
\end{lem}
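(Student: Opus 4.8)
The plan is to read this as a well-definedness statement: I must check that the coordinatewise map $\mathbf{x}\mapsto\mathbf{x}^{\theta}$ is a bijection of $\ff{q}^{m}$ that is additive and $\theta$-semilinear, and then deduce that for a subspace $W$ the subspace $\langle\mathbf{w}_1^{\theta},\ldots,\mathbf{w}_r^{\theta}\rangle$ depends only on $W$ (not on the chosen generators), is again a subspace, and has the same dimension as $W$.

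First I would record the elementary identities that follow directly from $\theta$ being a field homomorphism: for all $\mathbf{x},\mathbf{y}\in\ff{q}^{m}$ and $c\in\ff{q}$ one has $(\mathbf{x}+\mathbf{y})^{\theta}=\mathbf{x}^{\theta}+\mathbf{y}^{\theta}$ and $(c\mathbf{x})^{\theta}=\theta(c)\,\mathbf{x}^{\theta}$, both checked coordinate by coordinate. Since $\theta$ is an automorphism of $\ff{q}$ it is in particular a bijection of $\ff{q}$, so $\mathbf{x}\mapsto\mathbf{x}^{\theta}$ is a bijection of $\ff{q}^{m}$ with inverse $\mathbf{x}\mapsto\mathbf{x}^{\theta^{-1}}$.

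Next I would analyze the image set $\{\mathbf{x}^{\theta}\ |\ \mathbf{x}\in W\}$. Additivity makes it closed under sums; given $\mathbf{x}^{\theta}$ and a scalar $d\in\ff{q}$, surjectivity of $\theta$ lets me write $d=\theta(c)$, whence $d\,\mathbf{x}^{\theta}=(c\mathbf{x})^{\theta}$ lies in the set because $c\mathbf{x}\in W$; thus $\{\mathbf{x}^{\theta}\ |\ \mathbf{x}\in W\}$ is a subspace of $\ff{q}^{m}$. If $W=\langle\mathbf{w}_1,\ldots,\mathbf{w}_r\rangle$, then every vector of $W$ has the form $\sum_j c_j\mathbf{w}_j$, so its image $\sum_j\theta(c_j)\mathbf{w}_j^{\theta}$ lies in $\langle\mathbf{w}_1^{\theta},\ldots,\mathbf{w}_r^{\theta}\rangle$, and conversely each $\mathbf{w}_j^{\theta}$ is the image of $\mathbf{w}_j\in W$; hence $\{\mathbf{x}^{\theta}\ |\ \mathbf{x}\in W\}=\langle\mathbf{w}_1^{\theta},\ldots,\mathbf{w}_r^{\theta}\rangle=W^{\theta}$. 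This identification shows that $W^{\theta}$ is independent of the chosen spanning family, which is really the only point requiring care; everything else is a formal consequence of semilinearity.

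Finally, to see $\dim W^{\theta}=\dim W$, I would take a basis $\mathbf{w}_1,\ldots,\mathbf{w}_r$ of $W$; the $\mathbf{w}_j^{\theta}$ span $W^{\theta}$ by the above, and if $\sum_j d_j\mathbf{w}_j^{\theta}=0$ then writing $d_j=\theta(c_j)$ gives $\bigl(\sum_j c_j\mathbf{w}_j\bigr)^{\theta}=0$, so $\sum_j c_j\mathbf{w}_j=0$ by injectivity of $\theta$, forcing all $c_j=0$ and hence all $d_j=0$. Thus $\{\mathbf{w}_j^{\theta}\}$ is a basis of $W^{\theta}$ and $\dim W^{\theta}=r$. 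I do not anticipate any genuine obstacle; the only thing to keep track of is that surjectivity of the automorphism $\theta$ gets used twice — once for closure of $W^{\theta}$ under all scalars, and once (through injectivity) in the dimension count.
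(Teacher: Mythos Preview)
Your argument is correct: you verify additivity and $\theta$-semilinearity of the coordinatewise map, deduce that the image of a subspace is a subspace, show it coincides with the span of the images of any generating set (hence is independent of the chosen generators), and check that dimension is preserved. The only small expository slip is in your closing sentence, where you say surjectivity is used ``through injectivity'' in the dimension count; there it is really injectivity of $\theta$ (hence of $\mathbf{x}\mapsto\mathbf{x}^{\theta}$) that is being invoked, not surjectivity.

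As for comparison with the paper: the paper gives no proof at all for this lemma. It is stated immediately after the analogous lemma for the $\GL{m}{\ff{q}}$-action, and both are left without argument, functioning in effect as definitions of the respective actions on vectors and on subspaces. So your proposal does strictly more than the paper does here; you supply the well-definedness verification that the paper takes for granted.
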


\begin{defn}
For $\alpha \subseteq [m]$, we denote the set $m - \alpha:= \{ m - a_i \ | \ a_i \in \alpha\}$.
\end{defn}

W.L. Chow proved the following:
\begin{prop}\cite[Chow]{C49}

Let $1 < \ell < m-1$. The permutations of $\G_{\ell, m}$ which map lines to lines is given by the group ${ \displaystyle \operatorname {\Gamma L}(\ff{q}) }$. That is, these permutations are given by compositions of the following permutations:
\begin{itemize}
\item The permutation $\sigma_M$ where $\sigma_M(W) = W.M$  for $M \in \GL{m}{\ff{q}}$.
\item The permutation $\sigma_\theta$ where $\sigma_\theta(W) = W^\theta$ for $\theta$ a field automorphism of $\ff{q}$, 
\item  If $\ell = m - \ell$, the permutation $\sigma_\perp$ where $\sigma_\perp(W) = W^{\perp}$ where $W^\perp$ is the orthogonal complement of $W$.
\end{itemize}
\end{prop}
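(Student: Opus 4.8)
One inclusion is immediate. Write $L_{A,B}:=\{\,W\in\G_{\ell,m}:A\subseteq W\subseteq B\,\}$ for the pencil determined by subspaces $A\subseteq B$ with $\dim A=\ell-1$, $\dim B=\ell+1$; these pencils are precisely the \emph{lines} of $\G_{\ell,m}$. Then $\sigma_M(L_{A,B})=L_{M(A),M(B)}$, $\sigma_\theta(L_{A,B})=L_{A^\theta,B^\theta}$, and $\sigma_\perp(L_{A,B})=L_{B^\perp,A^\perp}$ (since $A\subseteq W\subseteq B$ iff $B^\perp\subseteq W^\perp\subseteq A^\perp$), so every generator — hence every element of the group in the statement — maps lines to lines. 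For the reverse inclusion the plan is to pass to the Grassmann graph and then invoke the Fundamental Theorem of Projective Geometry. Call distinct $\ell$--subspaces $W,W'$ \emph{adjacent} when $\dim(W\cap W')=\ell-1$, equivalently $\dim(W+W')=\ell+1$. Every pencil has exactly $q+1$ elements, and two distinct $\ell$--subspaces are adjacent if and only if they lie on a (then unique) pencil, namely $L_{W\cap W',\,W+W'}$. Hence a line--preserving bijection $\sigma$ of $\G_{\ell,m}$ preserves adjacency; and since $\sigma$ is injective on points while every pencil has $q+1$ of them, $\sigma$ is injective, hence bijective, on the finite set of pencils, so $\sigma^{-1}$ is line--preserving too. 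Thus $\sigma$ is an automorphism of the Grassmann graph that permutes its pencils.

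The second step is to classify the maximal cliques of the Grassmann graph: they are exactly the \emph{stars} $S_A:=\{W:A\subseteq W\}$ for $\dim A=\ell-1$, of cardinality $(q^{m-\ell+1}-1)/(q-1)$, and the \emph{tops} $T_B:=\{W:W\subseteq B\}$ for $\dim B=\ell+1$, of cardinality $(q^{\ell+1}-1)/(q-1)$. The substantive point is the lemma that any three pairwise--adjacent $\ell$--subspaces share a common $(\ell-1)$--subspace or lie in a common $(\ell+1)$--superspace, which one then bootstraps to a whole maximal clique. Now $\sigma$ permutes maximal cliques; by symmetry (replacing $\sigma$ by its conjugate under $\perp$) assume $\ell\le m-\ell$, so the stars are strictly larger than the tops except when $\ell=m-\ell$, in which case composing $\sigma$ with $\sigma_\perp$ if necessary (this sends $S_A$ to $T_{A^\perp}$ and back) brings us to the same situation. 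In every case, then, we may assume $\sigma(S_A)=S_{\beta(A)}$ for a bijection $\beta$ of the set of $(\ell-1)$--subspaces of $\ff{q}^m$.

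The last step is a descent on $\ell$. From $S_A\cap T_B=L_{A,B}$ whenever $A\subseteq B$, and from the fact that distinct stars $S_A,S_{A'}$ meet in exactly one point precisely when $\dim(A+A')=\ell$ (the point being $A+A'$), transporting these incidences through $\sigma$ shows that $\beta$ preserves codimension--$1$ intersection among $(\ell-1)$--subspaces and sends their pencils to pencils; that is, $\beta$ is again a line--preserving automorphism, now of $\G_{\ell-1,m}$. Because $\ell\le m-\ell$, every smaller level $\ell'$ reached in the descent satisfies $2\ell'<m$, so no further interchange of the two clique families can occur, and the reduction iterates unobstructed down to level $1$, where $\G_{1,m}=\mathbb{P}(\ff{q}^m)$ and the pencils are ordinary projective lines. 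Since $m-1\ge 2$ in the range $1<\ell<m-1$, the Fundamental Theorem of Projective Geometry presents the resulting collineation of $\mathbb{P}^{m-1}(\ff{q})$ as a composite $\sigma_M\circ\sigma_\theta$ with $M\in\GL{m}{\ff{q}}$ and $\theta$ a field automorphism; and since a Grassmann automorphism is determined by its effect on stars — every $\ell$--subspace being the span of the $(\ell-1)$--subspaces it contains — this conclusion lifts back through the levels to exhibit the original $\sigma$ in the same form, together with the factor $\sigma_\perp$ possibly introduced above. I expect the real work to be the maximal--clique classification together with the bookkeeping that the pencil structure is faithfully carried along the descent; the concluding appeal to the Fundamental Theorem of Projective Geometry is a standard black box.
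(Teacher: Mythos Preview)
The paper does not prove this proposition at all: it is stated with the citation \cite{C49} and used as a black box, with no argument supplied. There is therefore nothing in the paper to compare your proof against.

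That said, your outline is the standard route to Chow's theorem and is essentially correct as a sketch: identify lines with pencils $L_{A,B}$, pass to adjacency in the Grassmann graph, classify maximal cliques as stars $S_A$ and tops $T_B$, use the cardinality disparity (or a single application of $\sigma_\perp$ when $2\ell=m$) to force $\sigma$ to send stars to stars, descend to $\G_{\ell-1,m}$, and finish with the Fundamental Theorem of Projective Geometry at level $1$. Two small points of phrasing: when you write ``replacing $\sigma$ by its conjugate under $\perp$'' to reduce to $\ell\le m-\ell$, this is really transporting the problem along the duality $\G_{\ell,m}\cong\G_{m-\ell,m}$ rather than conjugating $\sigma$ (conjugation only makes literal sense when $2\ell=m$); and in the descent you should also check that $\beta$ is a bijection and that the induced map on $\G_{\ell-1,m}$ is again line-preserving in both directions, which follows from the same pencil bookkeeping you apply to $\sigma$. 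As you yourself note, the genuine work you have deferred is the maximal-clique lemma and the faithful transport of incidence through the descent; both are routine but require care.
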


With an orthogonal basis for $V$, the permutation $\sigma_\perp$ is also given by the Hodge star operator on $\bigwedge^\ell V$. Although the relation between $\bigwedge^\ell V$ and $\G_{\ell, m}$ is well known, for our purposes, we need only to consider the permutations of $\G_{\ell, m}$ onto itself given by the elements of ${ \displaystyle \operatorname {\Gamma L}(\ff{q}) }$. Note that $\sigma_\perp$ maps $\G_{\ell, m}$ onto $\G_{m-\ell, m}$. As such we will also consider ${ \displaystyle \operatorname {\Gamma L}(\ff{q}) }$ acting on $\bigcup_{i=1}^{m-1}\G_{i, m}$. This action is extended to flags as follows. 
\begin{defn}
 
Let $M \in \GL{m}{\ff{q}}$ and $\theta$ be a field automorphism.  Suppose $\alpha = (a_1, a_2, \ldots, a_\ell)$. Let $\mathcal{A} :=  A_1 \subsetneq A_2 \subsetneq A_3 \subsetneq \cdots \subsetneq A_{\ell}$ be an $\alpha$--flag.  Then We define:

\begin{itemize}
\item The linear transformation $M$ maps the $\alpha$-- flag $\mathcal{A}$ to the $\alpha$--flag: $$M(\mathcal{A}) := M(A_1) \subsetneq M(A_2) \subsetneq \cdots \subsetneq M(A_\ell).$$
\item The field automorphism $\theta$ maps the $\alpha$--flag $\mathcal{A}$ to the $\alpha$--flag $$\mathcal{A}^\theta := A_1^\theta \subsetneq A_2^\theta \subsetneq \cdots \subsetneq A_\ell^\theta.$$
\item The orthogonal complement, $\perp$ maps the $\alpha$--flag $\mathcal{A}$ to the $m-\alpha$--flag $$\mathcal{A}^{\perp} :=  A_{\ell}^\perp \subsetneq \cdots \subsetneq A_1^\perp.$$

\end{itemize}
\end{defn}

\section{Schubert Varieties}

Schubert varieties are special subvarieties of $\G_{\ell, m}$. By considering Schubert subvatieties, one can answer many geometrical questions about projective spaces in general and study the Grassmannian as well. The classical reference to Schubert varieties is \cite{KL72}. 

\begin{defn}
 Let $$ \alpha = (a_1 < a_2 < \cdots < a_\ell) \subseteq [m].$$  Let $\mathcal{A}$ be an $\alpha$--flag. The Schubert variety is defined as $$\Omega^{\mathcal{A}}_\alpha :=  \{ W \in \G_{\ell,m} \ | \ \dim(W \cap A_i) \geq i \}.$$ 
\end{defn}

We have included the $\alpha$--flag $\mathcal{A}$ in the notation for the Schubert variety $\Omega_\alpha^{\mathcal{A}}$ because we shall consider what happens to the Schubert varieties when the flag is changed. For any two  $\alpha$--flags, $\mathcal{A}$ and $\mathcal{B}$, the varieties $\Omega_\alpha^{\mathcal{A}}$ and $\Omega_\alpha^{\mathcal{B}}$ are isomorphic. However, the choice of flag may change the Schubert variety.

Some of the Schubert conditions $\dim (W \cap A_i) \geq i$ may be redundant.  Suppose $\alpha \subseteq [m]$ has two consecutive elements, say $a_i = a_{i -1}+1$. Each $W \in \Omega_{\alpha}^{\mathcal{A}}$ satisfies $\dim(W \cap A_i) \geq i$. As $\dim A_i = a_i$ and $\dim A_{i-1} = \dim A_{i} -1$ the inequality $\dim(W \cap A_{i}) \geq i$ implies $\dim(W \cap A_{i-1}) \geq i-1$. Therefore the condition $\dim(W \cap A_{i-1}) \geq i-1$ is redundant.  This motivates the following definition.

\begin{defn}
Let $\alpha \subseteq [m]$. We define the \emph{nonconsecutve subset of $\alpha$} as $$\alpha_{nc} := \{ a_i \ | \ a_i +1 \not\in \alpha  \}.$$

\end{defn}

The previous discussion implies the following.

\begin{lem}
Let $\alpha = (a_1, a_2, \ldots, a_{\ell} ) \subseteq [m]$. Suppose $$\mathcal{A} := A_1 \subsetneq A_2 \subsetneq A_3 \subsetneq \cdots \subsetneq A_\ell \subseteq V$$ and $$\mathcal{B} := B_1 \subsetneq B_2 \subsetneq B_3 \subsetneq \cdots \subsetneq B_\ell \subseteq V$$ are two $\alpha$--flags.

If $A_i = B_i  \ \forall i \in \alpha_{nc}$, then $$\Omega_{\alpha}^{\mathcal{A}} = \Omega_{\alpha}^{\mathcal{B}}.$$
\end{lem}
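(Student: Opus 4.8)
The plan is to show that the two Schubert varieties have the same membership condition, i.e., that for any $W \in \G_{\ell,m}$, the system of inequalities $\dim(W \cap A_i) \geq i$ for all $i$ is equivalent to the subsystem indexed only by $i \in \alpha_{nc}$, and since the same is true for $\mathcal{B}$, and the two flags agree on the indices in $\alpha_{nc}$, the two conditions coincide. So the real content is: \emph{the Schubert conditions indexed by $[m] \setminus \alpha_{nc}$ are implied by those indexed by $\alpha_{nc}$}.

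First I would set up notation: given $\alpha = (a_1 < a_2 < \cdots < a_\ell)$, partition the index set $\{1,\ldots,\ell\}$ into maximal runs of consecutive values of $\alpha$. That is, if $a_i = a_{i-1}+1$ we say $i$ and $i-1$ are in the same run. For each run $\{s, s+1, \ldots, t\}$ (so $a_s, a_{s+1}=a_s+1, \ldots, a_t = a_s+(t-s)$ are consecutive integers), only the top index $t$ lies in $\alpha_{nc}$ (since $a_t + 1 \notin \alpha$ precisely for the largest element of a run), and $s,\ldots,t-1$ are the redundant indices. The key claim to prove is then purely linear-algebraic: if $U$ is a subspace and $A_{t-1} \subsetneq A_t$ with $\dim A_t - \dim A_{t-1} = 1$, then $\dim(W \cap A_t) \geq k$ implies $\dim(W \cap A_{t-1}) \geq k-1$. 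This is immediate because $W \cap A_{t-1} = (W \cap A_t) \cap A_{t-1}$ is a subspace of $W \cap A_t$ of codimension at most $1$ (it is cut out inside $W \cap A_t$ by intersecting with the hyperplane $A_{t-1}$ of $A_t$). Iterating down a run, $\dim(W \cap A_t) \geq t$ (which is the condition at the top of the run, wanted for membership) forces $\dim(W \cap A_{t-j}) \geq t - j$ for each $j$, which is exactly the condition at index $t-j$.

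Then I would assemble the argument. Fix $W \in \G_{\ell,m}$. If $W \in \Omega_\alpha^{\mathcal{A}}$, then in particular $\dim(W \cap A_i) \geq i$ holds for $i \in \alpha_{nc}$; since $A_i = B_i$ for $i \in \alpha_{nc}$, we get $\dim(W \cap B_i) \geq i$ for those $i$. By the run argument applied to $\mathcal{B}$ (using that within a run the dimensions $\dim B_j$ increase by $1$ at each step, which is forced by $B_j$ being a $\alpha$-flag), these conditions propagate down each run to give $\dim(W \cap B_i) \geq i$ for \emph{all} $i \in \{1,\ldots,\ell\}$, so $W \in \Omega_\alpha^{\mathcal{B}}$. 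The reverse inclusion is symmetric. Hence $\Omega_\alpha^{\mathcal{A}} = \Omega_\alpha^{\mathcal{B}}$.

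I do not expect a serious obstacle here; the statement is essentially a careful bookkeeping of the redundancy observation already made in the paragraph preceding the definition of $\alpha_{nc}$. The one point requiring a little care is making sure the downward propagation within a run reaches every non-consecutive-gap index and not just the immediate predecessor — i.e., that $\alpha_{nc}$ really is a "hitting set" for all the runs in the sense that every index $i \notin \alpha_{nc}$ has some index $i' \in \alpha_{nc}$ with $i' > i$, $a_{i'} - a_i = i' - i$, and all intermediate $A_j$ forming a flag with unit dimension jumps. That follows directly from the definition of $\alpha_{nc}$: if $a_i + 1 \in \alpha$, walk up through consecutive elements until you first hit an $a_{i'}$ with $a_{i'}+1 \notin \alpha$; such $i'$ exists and lies in $\alpha_{nc}$, and the run structure gives exactly the chain needed. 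I would state this as a short preliminary observation before running the propagation.
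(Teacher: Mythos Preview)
Your proposal is correct and follows essentially the same approach as the paper: both rely on the redundancy observation (if $a_i = a_{i-1}+1$ then the condition at $i$ implies the one at $i-1$) to reduce the defining conditions to those indexed by $\alpha_{nc}$, and then invoke the hypothesis $A_i = B_i$ for those indices. Your version is a careful, fully spelled-out execution of what the paper compresses into two sentences referencing the preceding discussion.
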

\begin{proof}
As we have discussed, the conditions given by $A_i$ and $B_i$ where $\dim A_i = \dim B_i \in \alpha_{nc}$ imply the remaning conditions. By hypothesis, $A_i = B_i$ whenever $\dim A_i = \dim B_i \in \alpha_{nc}$. Equality follows. \end{proof}

Laksov and Kleiman  \cite{KL72} proved that two Schubert varieties are isomorphic if and only if they have the same dimension sequence. Therefore we have stated that propositon as follows.

\begin{prop}[\cite{KL72}, Proposition 4]

$$\Omega_{\alpha}^{\mathcal{A}} = \Omega_{\beta}^{\mathcal{B}} \makebox{ implies }\alpha = \beta.$$
\label{prop:KL}
\end{prop}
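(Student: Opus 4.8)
The plan is to show that the dimension sequence $\alpha$ is recoverable from the set of subspaces $\Omega_\alpha^{\mathcal{A}}$ alone, independent of the flag $\mathcal{A}$. Since $\Omega_\alpha^{\mathcal{A}} = \Omega_\beta^{\mathcal{B}}$ means the two sets of $\ell$-dimensional (resp. $\ell'$-dimensional) subspaces coincide, the first observation is that the ambient dimension $\ell$ of the members of the variety is determined: every $W \in \Omega_\alpha^{\mathcal{A}}$ has $\dim W = \ell$, so $\ell = \ell'$ and in particular $\alpha$ and $\beta$ have the same length. It then suffices to recover each $a_i$ from the set. The cleanest invariant to use is an extremal one: I would look at the minimum and maximum of $\dim(W \cap U)$ as $W$ ranges over $\Omega_\alpha^{\mathcal{A}}$, for suitably chosen subspaces $U$, and relate these to the $a_i$.

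The key computation is the following. For a fixed subspace $U$ with $\dim U = d$, I claim $\max_{W \in \Omega_\alpha^{\mathcal{A}}} \dim(W \cap U)$ depends only on $d$ and $\alpha$, and likewise the minimum is $\max\{\, i \;:\; a_i \le d \,\}$ when $U = A_i$ is a flag member, but more usefully: the largest $d$ for which \emph{every} $W \in \Omega_\alpha^{\mathcal{A}}$ meets some fixed subspace... rather than pursue $U$ ranging over all subspaces, I would instead extract $\alpha$ from the cell decomposition. Concretely, for $W \in \G_{\ell,m}$ and the reference flag $\mathcal{A}$, the function $i \mapsto \dim(W \cap A_i)$ is non-decreasing, increases by at most $1$ at each step, and reaches $\ell$; the set of $W$ whose associated ``jump set'' $\{\, a \in [m] : \dim(W\cap A_a) > \dim(W \cap A_{a-1})\,\}$ equals a fixed $\ell$-subset $\beta$ is the open Schubert cell, and $\Omega_\alpha^{\mathcal{A}}$ is the union of all cells with jump set $\beta \le \alpha$ in the componentwise order. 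Thus $\alpha = \max$ of the jump sets occurring in $\Omega_\alpha^{\mathcal{A}}$. The point is then to argue this set of jump sets is intrinsic: the jump set of $W$ is $\{\dim(W\cap A_a) - \dim(W \cap A_{a-1}) \ne 0\}$, which a priori uses $\mathcal{A}$, but the collection $\{\, \text{jump set of } W : W \in \Omega_\alpha^{\mathcal{A}}\,\}$ is exactly $\{\beta : \beta \le \alpha\}$ regardless of which $\alpha$-flag is chosen, and any down-closed family of $\ell$-subsets of $[m]$ with a unique maximal element determines that element. Hence if $\Omega_\alpha^{\mathcal{A}} = \Omega_\beta^{\mathcal{B}}$, computing the family of jump sets with respect to $\mathcal{A}$ on one side and $\mathcal{B}$ on the other is not directly comparable — so instead I would fix a single flag $\mathcal{C}$ and observe that for \emph{any} flag $\mathcal{A}$, the generic $W$ in $\Omega_\alpha^{\mathcal{A}}$ (those in the big cell) are characterized set-theoretically as the $W$ for which $\dim(W \cap A_i)$ takes the minimal value $\#\{j : a_j \le a_i\}$ compatible with membership; the maximal value of $\#\{a \in [m] : \exists W \in \Omega, \dim(W \cap W') \text{ forced}\}$ ...

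Let me restate the approach more carefully to isolate the real content. Define, for a subset $S$ of $\G_{\ell,m}$, the integer $f_S(d) := \min_{U:\dim U = d}\ \max_{W \in S}\dim(W \cap U)$. This is manifestly an invariant of the set $S$. I would compute $f_{\Omega_\alpha^{\mathcal{A}}}(d)$ and show it equals $\#\{\, i : a_i \le d\,\}$: the ``$\le$'' direction comes from taking $U$ to be a subspace in general position relative to $\mathcal{A}$, using that a general $W$ in the big cell of $\Omega_\alpha^{\mathcal{A}}$ can be chosen to meet $U$ in only the forced dimension; the ``$\ge$'' direction comes from the fact that for \emph{every} $d$-dimensional $U$ there is some $W \in \Omega_\alpha^{\mathcal{A}}$ with $\dim(W \cap U) \ge \#\{i : a_i \le d\}$, which one builds by choosing $W \supseteq$ (a suitable subspace of $U \cap A_i$) flag-compatibly — a dimension-count / intersection argument inside the flag. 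Once $f_{\Omega_\alpha^{\mathcal{A}}}(d) = \#\{i : a_i \le d\}$ is established, the sequence $\alpha$ is read off from the jumps of $d \mapsto f(d)$, and since the left-hand side is an invariant of the underlying set, $\Omega_\alpha^{\mathcal{A}} = \Omega_\beta^{\mathcal{B}}$ forces the two step functions to agree and hence $\alpha = \beta$.

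I expect the main obstacle to be the two general-position lemmas needed to pin down $f_{\Omega_\alpha^{\mathcal{A}}}(d)$ exactly — in particular the upper bound, which requires exhibiting, for a cleverly chosen $U$ of each dimension $d$, an element $W$ of the Schubert variety whose intersection with $U$ is no larger than $\#\{i : a_i \le d\}$; this is a genuine (if standard) transversality statement about Schubert cells and must be checked to work over the finite field $\ff{q}$, where ``general position'' arguments need the field to be large enough or need an explicit construction rather than a dimension-counting genericity argument. An alternative that sidesteps finite-field subtleties is to work cell-by-cell: show directly that $\Omega_\alpha^{\mathcal{A}}$, as a set, equals the disjoint union of Schubert cells indexed by $\{\beta \le \alpha\}$ and that each such cell is non-empty over any $\ff{q}$, so that the maximal jump-set $\alpha$ is intrinsic; this replaces transversality with the combinatorics of the Bruhat order, which is the route I would ultimately take if the general-position estimates prove delicate.
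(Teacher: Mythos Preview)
The paper does not supply its own proof of this proposition; it is quoted from Kleiman--Laksov \cite{KL72} and used as a black box. So there is no ``paper's proof'' to compare against, and your proposal has to be judged on its own.

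Your main proposed invariant is incorrect. You define
\[
f_S(d) \;=\; \min_{\dim U = d}\ \max_{W \in S}\ \dim(W\cap U)
\]
and claim $f_{\Omega_\alpha^{\mathcal{A}}}(d) = \#\{\,i : a_i \le d\,\}$. Take $m=4$, $\ell=2$, $\alpha=(2,4)$, so that $\Omega_\alpha^{\mathcal{A}}$ is the set of $2$--planes meeting a fixed $2$--plane $A_1$ nontrivially. For $d=1$ your formula predicts $f_S(1)=0$. But for \emph{every} line $U\subseteq\ff{q}^4$ one can find $W\in\Omega_\alpha^{\mathcal{A}}$ with $U\subseteq W$: if $U\subseteq A_1$ take $W=A_1$; otherwise take $W=U\oplus L$ for any line $L\subseteq A_1$. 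Hence $f_S(1)=1\neq 0$. The same example shows that your earlier claim that $\max_{W\in S}\dim(W\cap U)$ depends only on $\dim U$ and $\alpha$ is already false, since for $d=2$ the maximum is $2$ when $U=A_1$ but $1$ when $U\cap A_1=0$. So this route does not recover $\alpha$.

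Your fallback via Bruhat cells is the right instinct, but as written it is circular. You observe that, decomposing with respect to a complete flag refining $\mathcal{A}$, the set $\Omega_\alpha^{\mathcal{A}}$ is the union of nonempty cells indexed by $\{\gamma\le\alpha\}$, and that decomposing $\Omega_\beta^{\mathcal{B}}$ with respect to $\mathcal{B}$ gives $\{\gamma\le\beta\}$; but these are two \emph{different} cell decompositions of the same set, and you give no reason why the maximal labels must coincide. Cardinality alone does not suffice: for $m=4$, $\ell=2$, both $\alpha=(1,4)$ and $\alpha=(2,3)$ yield $1+q+q^2$ points over $\ff{q}$. What is genuinely intrinsic is, for instance, the span $\sum_{W\in S}W$ and the intersection $\bigcap_{W\in S}W$, from which one reads off $a_\ell$ and the largest $k$ with $a_k=k$, and then one can induct by passing to a smaller Grassmannian; or one can invoke the singular--locus description of Schubert varieties to peel off the nonconsecutive indices. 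Either way, the argument you have sketched needs an additional flag--independent invariant beyond cell combinatorics to close the gap.
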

 Now we have the following theorem.

\begin{thm}
\label{thm:SchubertEqualityTheorem}
Let $ \alpha = (a_1, a_2, \ldots, a_{\ell} )\subseteq [m]$.  Let $\mathcal{A}$ and  $\mathcal{B}$ be two $\alpha$--flags. Then $$\Omega_{\alpha}^{\mathcal{A}} = \Omega_{\alpha}^{\mathcal{B}} \makebox{ if and only if } A_{i} = B_{i}, \forall a_i  \in \alpha_{nc} .$$  

\end{thm}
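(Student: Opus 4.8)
The plan is to prove the two directions separately. The direction ``$A_i = B_i$ for all $a_i \in \alpha_{nc}$ implies $\Omega_\alpha^{\mathcal{A}} = \Omega_\alpha^{\mathcal{B}}$'' is exactly the content of the Lemma established just above (on non-consecutive subsets), so nothing new is needed there; I would simply cite it.

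For the converse, suppose $\Omega_\alpha^{\mathcal{A}} = \Omega_\alpha^{\mathcal{B}}$ and fix an index $j$ with $a_j \in \alpha_{nc}$; the goal is to show $A_j = B_j$. The strategy is to reconstruct each subspace $A_i$ intrinsically from the variety $\Omega_\alpha^{\mathcal{A}}$, so that equality of the varieties forces equality of the reconstructed subspaces. The natural candidate is to look at the union (or rather the span, or the set-theoretic behaviour) of all $W \in \Omega_\alpha^{\mathcal{A}}$ that are ``concentrated low'' in the flag. Concretely, I would consider, for each index $j$, the family of $W \in \Omega_\alpha^{\mathcal{A}}$ for which $\dim(W \cap A_j) \geq j$ is tight in the strongest possible sense — e.g. those $W$ contained in $A_j$ together with a generic complement, or better: I claim $A_j = \sum \{ W \cap A_j' : \ldots\}$. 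A cleaner route: observe that $A_j$ is spanned by the $j$-dimensional subspaces $U \subseteq A_j$, and each such $U$ is $W \cap A_j$ for some $W \in \Omega_\alpha^{\mathcal{A}}$ (extend $U$ by choosing $\mathbf{a}_{j+1}$-type vectors from a compatible basis to meet the higher conditions, using that $a_j \in \alpha_{nc}$ means $a_{j+1} \ge a_j + 2$ so there is room). Thus $A_j = \mathrm{span}\{ W \cap A_j : W \in \Omega_\alpha^{\mathcal{A}}, \dim(W\cap A_j) = j\}$, but the right-hand side is not yet manifestly intrinsic because $A_j$ itself appears.

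To make it intrinsic I would instead characterize $A_j$ as the span of all $W \in \Omega_\alpha^{\mathcal{A}}$ that are ``as small as forced'' — but since all $W$ have dimension $\ell$, the right object is: $A_j$ is the smallest subspace $T$ of $V$ such that every $W$ in a suitable subfamily satisfies $\dim(W \cap T) \ge j$. Alternatively, and this is the approach I would actually push through: use Proposition~\ref{prop:KL} together with an induction on $\ell$. Namely, intersect $\Omega_\alpha^{\mathcal{A}}$ with a sub-Grassmannian, or project, to peel off the top condition; the non-consecutiveness of $a_j$ is what guarantees the peeling does not collapse the $j$-th condition into a neighbouring one. Concretely, for the largest index $\ell$: if $a_\ell = m$ the condition is vacuous and we reduce to $\alpha' = (a_1,\dots,a_{\ell-1})$ inside $\G_{\ell-1, ?}$; if $a_\ell < m$, consider $\bigcup_{W \in \Omega_\alpha^{\mathcal A}} W$ — I expect this to be exactly $A_\ell$ (every vector of $A_\ell$ lies in some such $W$, and conversely each $W$ meets $A_\ell$ but a generic $W$ is not contained in it, so one must argue more carefully; the union of all $W$ is $A_\ell$ precisely because given $v \notin A_\ell$ one can still put $v$ in a $W \in \Omega_\alpha^{\mathcal A}$, so actually the union is all of $V$ when $a_\ell<m$). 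That shows the naive union fails and confirms the span-type invariant is the right one.

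The main obstacle, then, is producing a genuinely flag-free description of each $A_i$ from the variety; once that description is in hand the theorem is immediate. I would resolve it as follows: define, for each $i$, $$\widetilde{A_i} := \bigcap \{ T \le V \ : \ W \cap T = W \cap A_i \ \text{for all} \ W \in \Omega_\alpha^{\mathcal A} \text{ with } \dim(W \cap A_i) = i \},$$ show $\widetilde{A_i} = A_i$ when $a_i \in \alpha_{nc}$ by exhibiting enough $W$'s (using a basis adapted to $\mathcal{A}$ and the gap $a_{i+1} \ge a_i + 2$) so that the $i$-dimensional subspaces $W \cap A_i$ sweep out all of $A_i$ yet cannot all lie in a proper subspace, and finally note that $\widetilde{A_i}$ depends only on $\Omega_\alpha^{\mathcal A}$ as a set. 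Applying this to both $\mathcal{A}$ and $\mathcal{B}$ and using $\Omega_\alpha^{\mathcal A} = \Omega_\alpha^{\mathcal B}$ gives $A_i = \widetilde{A_i} = \widetilde{B_i} = B_i$ for every $a_i \in \alpha_{nc}$, completing the proof. The delicate point to get right is the case analysis on whether $a_i$ is or is not the top of a ``run'' and the explicit extension of an $i$-dimensional $U \subseteq A_i$ to a full $W \in \Omega_\alpha^{\mathcal A}$; the non-consecutive hypothesis is used exactly here to guarantee the needed dimensional slack.
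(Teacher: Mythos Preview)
Your ``if'' direction is fine; citing the preceding lemma is exactly what the paper does. The trouble is entirely in the converse, where you try to recover each $A_i$ intrinsically from $\Omega_\alpha^{\mathcal A}$. The definition you finally settle on,
\[
\widetilde{A_i} \;:=\; \bigcap \Bigl\{\, T \le V \ :\ W \cap T = W \cap A_i \ \text{for all}\ W \in \Omega_\alpha^{\mathcal A}\ \text{with}\ \dim(W\cap A_i)=i \,\Bigr\},
\]
is not flag-free: both the equality $W \cap T = W \cap A_i$ and the restriction $\dim(W\cap A_i)=i$ refer to $A_i$ itself. So the claim that ``$\widetilde{A_i}$ depends only on $\Omega_\alpha^{\mathcal A}$ as a set'' is precisely what remains to be proved, and it is no easier than the theorem. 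The circularity is genuine, not merely notational: to conclude $\widetilde{A_i}=\widetilde{B_i}$ you would first need $\{W:\dim(W\cap A_i)=i\}=\{W:\dim(W\cap B_i)=i\}$ and $W\cap A_i = W\cap B_i$ on that set, which already presupposes $A_i=B_i$. Your earlier candidate invariants (union of all $W$, span of the $W\cap A_i$'s) you yourself correctly rejected for the same reason, and the inductive ``peeling'' idea is only gestured at.

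The paper bypasses the whole reconstruction programme by arguing the contrapositive with a single witness. Take $a_s\in\alpha_{nc}$ maximal with $A_s\neq B_s$, pick $\mathbf{x}\in A_s\setminus B_s$, and set $W=\langle\,\mathbf{a}_{a_u}:u\neq s\,\rangle + \langle\mathbf{x}\rangle$ (or $W=\langle\mathbf{a}_1,\dots,\mathbf{a}_{\ell-1},\mathbf{x}\rangle$ when $s=\ell$). One checks $W\in\Omega_\alpha^{\mathcal A}$ directly from the basis, while $\dim(W\cap B_s)\le s-1$ because $\mathbf{x}\notin B_s$; hence $W\notin\Omega_\alpha^{\mathcal B}$. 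This explicit construction is a few lines long, whereas an intrinsic-invariant argument, even once repaired, would be substantially heavier.
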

\begin{proof}

From the previous discussion, the veracity of the only if direction is clear. 
 
Let $a_s$ be the largest element in $\alpha_{nc}$ such that $A_s \neq B_s$. Let $a_{r_1}$ be the next smallest index in $\alpha_{nc}$ and let Let $a_{r_2}$ be the next largest index in $\alpha_{nc}$. The choice of $s$ implies $A_{r} = B_{r}$ for any index in $\alpha_{nc}$ greater than $s$.

As $\mathcal{A}$ and $\mathcal{B}$ are $\alpha$--flags, there exists $\mathbf{a}_1, \mathbf{a}_2, \ldots, \mathbf{a}_{m}$ such that $A_i$ is spanned by $\{  \mathbf{a}_j \ | \ 1 \leq j \leq a_i \}$, and there exists $\mathbf{b}_1, \mathbf{b}_2, \ldots, \mathbf{b}_{m}$ such that $B_i$ is spanned by $\{  \mathbf{b}_j \ | \ 1 \leq j \leq a_i \}$.

If $a_s = a_\ell$ is the largest element, there exists $\mathbf{x} \in A_\ell \setminus B_\ell$. The vector space $W$ spanned by $\mathbf{a}_1, \mathbf{a}_2, \ldots, \mathbf{a}_{\ell-1} $ and $\mathbf{x}$ is in  $\Omega_{\alpha}^{\mathcal{A}}$ but not in $\Omega_{\alpha}^{\mathcal{B}}$. Thus $\Omega_{\alpha}^{\mathcal{A}} \neq \Omega_{\alpha}^{\mathcal{B}}$. 

If $a_s$ is not the largest element in $\alpha_{nc}$. Note that $A_s \neq B_s$ but $$A_{r_1} = B_{r_1}  \subseteq A_s,B_s \subseteq A_{r_2} = B_{r_2}.$$ Let $\mathbf{x} \in A_s \setminus B_s$. In this case consider the vector space $W$ spanned by the set $ \{ \mathbf{a}_{a_u}  \ | \ 1 \leq u \leq \ell,  u \neq s \}  \cup \{ \mathbf{x}\}$. In this case $\dim W \cap A_{u} = u$ for each $u \in \alpha_{nc}$, but $\dim W \cap B_s = s-1$. Therefore $W \in \Omega_{\alpha}^{\mathcal{A}}$ but not in $\Omega_{\alpha}^{\mathcal{B}}$. \end{proof}

Now we aim find the automorphism group of $\Omega_\alpha^{\mathcal{A}}.$

\begin{lem}
\label{lem:AutSchubert}
Let $\alpha = (a_1, a_2, \ldots, a_{\ell} ) \subseteq [m]$. Suppose $$\mathcal{A} := A_1 \subsetneq A_2 \subsetneq A_3 \subsetneq \cdots \subsetneq A_\ell \subseteq V$$ is an $\alpha$--flag. Let $\tau \in Aut(\G_{\ell, m}))$.  Suppose $\tau$ preserves the dimension of any linear subspace of $V$. Then  $\tau(\Omega_{\alpha}^\mathcal{A}) = \Omega_\alpha^{\tau(\mathcal{A})}$.

\end{lem}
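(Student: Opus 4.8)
The plan is to first pin down the structure of $\tau$ via Chow's Proposition, and then to check that a semilinear bijection of $V$ respects all the incidences that define a Schubert variety. By Chow's Proposition above, $\tau$ is a composition of maps of the form $\sigma_M$, $\sigma_\theta$, and --- only when $\ell = m-\ell$ --- $\sigma_\perp$. Now $\sigma_\perp$ sends a subspace of dimension $d$ to a subspace of dimension $m-d$, so a composition in which $\sigma_\perp$ appears an odd number of times does not preserve the dimension of a line, while $\sigma_\perp^2$ is the identity. Hence the hypothesis that $\tau$ preserves the dimension of every linear subspace of $V$ forces $\tau$ into the subgroup generated by the $\sigma_M$ and the $\sigma_\theta$. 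Concretely, there are $M \in \GL{m}{\ff{q}}$ and a field automorphism $\theta$ of $\ff{q}$ with $\tau(U) = g(U)$ for every subspace $U \leq V$, where $g \colon V \to V$, $g(\mathbf{x}) = \mathbf{x}^\theta M$, is the associated semilinear bijection. (For the boundary ranks $\ell \in \{1, m-1\}$, where Chow's hypothesis $1 < \ell < m-1$ does not literally apply, the same conclusion is available from the fundamental theorem of projective geometry, or by a direct argument.)

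Next I would record the two properties of $g$ that drive the proof: (i) $g$ carries each subspace to a subspace of the same dimension --- this is the hypothesis, and also follows from semilinearity and bijectivity of $g$; and (ii) $g$ commutes with intersections, $g(U_1 \cap U_2) = g(U_1) \cap g(U_2)$ for all subspaces $U_1, U_2 \leq V$. Property (ii) holds because $g^{-1}$ is again semilinear: $g(U_1) \cap g(U_2)$ is a subspace contained in both $g(U_1)$ and $g(U_2)$, so its preimage under $g$ is a subspace contained in both $U_1$ and $U_2$ and containing $U_1 \cap U_2$, hence equal to $U_1 \cap U_2$.

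Then I would unwind the definitions. Writing $\mathcal{A} \colon A_1 \subsetneq \cdots \subsetneq A_\ell$, the action of $\tau$ on flags introduced earlier gives that $\tau(\mathcal{A})$ is the $\alpha$--flag $g(A_1) \subsetneq \cdots \subsetneq g(A_\ell)$. For any $W \in \G_{\ell,m}$ and each index $i$, properties (i) and (ii) give
\[
\dim\bigl(g(W)\cap g(A_i)\bigr) = \dim g(W\cap A_i) = \dim(W\cap A_i).
\]
Hence $\dim(W\cap A_i)\ge i$ for all $i$ if and only if $\dim\bigl(g(W)\cap g(A_i)\bigr)\ge i$ for all $i$; equivalently, $W\in\Omega_\alpha^{\mathcal{A}}$ if and only if $\tau(W)=g(W)\in\Omega_\alpha^{\tau(\mathcal{A})}$. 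Since $\tau$ is a bijection of $\G_{\ell,m}$, this yields $\tau(\Omega_\alpha^{\mathcal{A}})=\Omega_\alpha^{\tau(\mathcal{A})}$.

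The one genuinely delicate point --- and the step I expect to be the main obstacle --- is the first one: ensuring that the automorphism $\tau$ of the single Grassmannian $\G_{\ell,m}$ is realized by one semilinear bijection $g$ of $V$ acting compatibly on subspaces of all dimensions, so that the symbol $\tau(\mathcal{A})$ (defined for flags earlier) really means $g(\mathcal{A})$, together with a clean treatment of the degenerate ranks $\ell=1$ and $\ell=m-1$. Everything after that identification is the routine bookkeeping displayed above.
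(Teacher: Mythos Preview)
Your proof is correct and follows essentially the same route as the paper: both reduce to the identities $\tau(W\cap A_i)=\tau(W)\cap\tau(A_i)$ and $\dim\tau(U)=\dim U$, then reindex by $U=\tau(W)$. The only difference is that you explicitly invoke Chow's theorem to realize $\tau$ as a semilinear bijection $g$ of $V$ (thereby justifying how $\tau$ acts on subspaces of dimensions other than $\ell$ and why it commutes with intersection), whereas the paper simply asserts $\tau(W\cap A_i)=\tau(W)\cap\tau(A_i)$ and relies on the earlier identification of $Aut(\G_{\ell,m})$ with $\Gamma L$ to make sense of $\tau(A_i)$ and $\tau(\mathcal{A})$.
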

\begin{proof} 

The Schubert variety $\Omega_{\alpha}^\mathcal{A}$ is defined by $$\{ W \in \G_{\ell, m} \ | \ \dim W \cap A_i \geq i\}.$$ The automorphism $\tau \in Aut(\G_{\ell, m})$ maps $\Omega_\alpha^{\mathcal{A}}$ to $$\tau(\Omega_{\alpha}^\mathcal{A })  = \{ \tau(W) \in \G_{\ell, m} \ | \ \dim \tau(W \cap A_i)\geq i\}.$$ In this case, $\tau(W \cap A_i) = \tau(W) \cap \tau(A_i)$. As $\tau$ is a permutation of the Grassmannian, we change the indexing variable to $\tau(W) = U$. Now the Schubert variety has the form: $$\tau(\Omega_{\alpha}^\mathcal{A })  = \{ U \in \G_{\ell, m} \ | \ \dim U \cap \tau(A_i)\geq i\}.$$ The right hand side is clearly $\Omega_\alpha^{\tau(\mathcal{A})}$ and equality follows.  \end{proof}

\begin{thm}
\label{thm:Variant}
Let $\alpha = (a_1, a_2, \ldots, a_{\ell} ) \subseteq [m]$. Suppose $$\mathcal{A} := A_1 \subsetneq A_2 \subsetneq A_3 \subsetneq \cdots \subsetneq A_\ell \subseteq V$$ is an $\alpha$--flag. Let $\tau \in Aut(\G_{\ell, m}))$.  Suppose $\tau$ preserves the dimension of any linear subspace of $V$. Then  $\tau \in Aut(\Omega_{\alpha}^{\mathcal{A}})$ if and only if $\tau(A_i) = A_i \forall a_i \in \alpha_{nc}$.

\end{thm}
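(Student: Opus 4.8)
The plan is to combine Theorem~\ref{thm:SchubertEqualityTheorem} with Lemma~\ref{lem:AutSchubert}. The key observation is that for a dimension-preserving automorphism $\tau$, the image $\tau(\mathcal{A})$ is again an $\alpha$--flag, and by Lemma~\ref{lem:AutSchubert} we have $\tau(\Omega_\alpha^{\mathcal{A}}) = \Omega_\alpha^{\tau(\mathcal{A})}$. Therefore $\tau \in Aut(\Omega_\alpha^{\mathcal{A}})$, which by definition means $\tau(\Omega_\alpha^{\mathcal{A}}) = \Omega_\alpha^{\mathcal{A}}$, is equivalent to the equality of Schubert varieties $\Omega_\alpha^{\tau(\mathcal{A})} = \Omega_\alpha^{\mathcal{A}}$. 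Since $\tau(\mathcal{A})$ and $\mathcal{A}$ are both $\alpha$--flags, Theorem~\ref{thm:SchubertEqualityTheorem} applies directly and tells us this equality holds if and only if $\tau(A_i) = A_i$ for all $a_i \in \alpha_{nc}$. That is exactly the claimed statement.

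Concretely, I would carry out the steps in this order. First, note that $\tau(\mathcal{A}) := \tau(A_1) \subsetneq \tau(A_2) \subsetneq \cdots \subsetneq \tau(A_\ell)$ is a genuine $\alpha$--flag: the inclusions remain strict because $\tau$ is a bijection on subspaces (being an automorphism of the Grassmannian extended to all subspaces), and $\dim \tau(A_i) = a_i$ by the dimension-preserving hypothesis. Second, invoke Lemma~\ref{lem:AutSchubert} to get $\tau(\Omega_\alpha^{\mathcal{A}}) = \Omega_\alpha^{\tau(\mathcal{A})}$. Third, observe that $\tau \in Aut(\Omega_\alpha^{\mathcal{A}})$ means precisely $\tau(\Omega_\alpha^{\mathcal{A}}) = \Omega_\alpha^{\mathcal{A}}$, so by the previous step this is the same as $\Omega_\alpha^{\tau(\mathcal{A})} = \Omega_\alpha^{\mathcal{A}}$. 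Fourth, apply Theorem~\ref{thm:SchubertEqualityTheorem} with $\mathcal{B} = \tau(\mathcal{A})$, so $B_i = \tau(A_i)$, to conclude that this equality holds if and only if $\tau(A_i) = A_i$ for every $a_i \in \alpha_{nc}$.

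There is essentially no hard obstacle here; the theorem is a clean corollary of the two preceding results. The one point that deserves a sentence of care is that $\tau$ must genuinely act as a bijection on \emph{all} subspaces (not just on the $\ell$--dimensional ones) in order for $\tau(\mathcal{A})$ to be well-defined and for Lemma~\ref{lem:AutSchubert}'s identity $\tau(W \cap A_i) = \tau(W) \cap \tau(A_i)$ to make sense; this is supplied by the hypothesis that $\tau$ preserves dimensions of arbitrary linear subspaces, together with the preceding setup (elements of $\operatorname{\Gamma L}(\ff q)$ of the non-$\sigma_\perp$ type act on $\bigcup_i \G_{i,m}$). I would also remark that the hypothesis rules out $\sigma_\perp$ (which reverses dimensions), so the theorem is really about the subgroup of $Aut(\G_{\ell,m})$ generated by the $\sigma_M$ and $\sigma_\theta$, and I would state that observation explicitly so the reader sees the scope. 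Beyond that, the proof is just the chain of equalities above.
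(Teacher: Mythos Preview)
Your proposal is correct and follows exactly the paper's approach: invoke Lemma~\ref{lem:AutSchubert} to get $\tau(\Omega_\alpha^{\mathcal{A}}) = \Omega_\alpha^{\tau(\mathcal{A})}$, then apply Theorem~\ref{thm:SchubertEqualityTheorem} to the two $\alpha$--flags $\mathcal{A}$ and $\tau(\mathcal{A})$. Your additional remarks about $\tau$ acting on all subspaces and the exclusion of $\sigma_\perp$ are helpful clarifications but do not change the structure of the argument.
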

\begin{proof} 

Lemma \ref{lem:AutSchubert} implies $\tau \in Aut(\G_{\ell, m})$ maps $\Omega_\alpha^{\mathcal{A}}$ to $\Omega_\alpha^{\tau(\mathcal{A})}.$ Theorem \ref{thm:SchubertEqualityTheorem} implies $\Omega_\alpha^{\mathcal{A}} = \Omega_\alpha^{\tau(\mathcal{A})}$ if and only if $\tau(A_i) = A_i \forall a_i \in \alpha_{nc}$.  \end{proof}


When $\ell \neq m- \ell$ the only line preserving bijections are those which preserve the dimension. On the remainder of the article, we shall assume $\ell = m - \ell$. Now we shall determine what happens when $\tau \in Aut(\G_{\ell, m})$ is a contravariant mapping. That is when $\dim \tau(W) = m - \dim W.$ In this case we do know $\tau$ maps the $\alpha$--flag $\mathcal{A}$ to the $m-\alpha$--flag $\tau(\mathcal{A})$, but it may be possible that due to the change of dimensions, that the Schubert conditions $\dim A_i \cap W \geq i$ might change. Now we study how $\tau$ might change the conditions $\dim A_i \cap W \geq i$. In this case we shall make use of the notion of a complete flag.

\begin{defn}
A \emph{complete flag} is a $[m]$--flag. That is, it is a flag which contains a subspace of each dimension. That is a complete flag is a sequence of nested subspaces $\mathcal{C} =  C_0 = \{ 0\} \subsetneq C_1 \subsetneq C_1 \subsetneq \cdots \subsetneq C_m = \ff{q}^m$ where $\dim C_i = i$.

If a complete flag $\mathcal{C}$ contains the subspaces $A_i$ where  $$\mathcal{A} := A_1 \subsetneq A_2 \subsetneq A_3 \subsetneq \cdots \subsetneq A_\ell \subseteq V$$ then $\mathcal{A}$ is known as a \emph{subflag} of $\mathcal{C}$.
\end{defn}

\begin{lem}
Let $\tau \in Aut(\G_{\ell, m})$ be a contravariant mapping. Let $\tau(\Omega_\alpha^{\mathcal{A}})$ be the image of $\Omega_\alpha^{\mathcal{A}}$. Then $\tau(\Omega_\alpha^{\mathcal{A}}) =  \Omega_\beta^{\tau(\mathcal{A})}$  where $\beta = \{  m+1 - j | \  j \not\in \alpha\}.$ 
\label{lem:dualSchubertaction}
\end{lem}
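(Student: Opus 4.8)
The plan is to reduce to the single contravariant map $\sigma_\perp$, and then to track how $\sigma_\perp$ transforms the \emph{jump sequence} of a subspace relative to a complete flag refining $\mathcal{A}$.

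\emph{Reduction and set-up.} By Chow's description a contravariant $\tau$ factors as $\tau=g\circ\sigma_\perp$ with $g$ covariant, hence dimension preserving. If we prove $\sigma_\perp(\Omega_\alpha^{\mathcal A})=\Omega_\beta^{\mathcal A^\perp}$ (with $\mathcal A^\perp$ as in the earlier definition), then Lemma~\ref{lem:AutSchubert} applied to $g$ gives $\tau(\Omega_\alpha^{\mathcal A})=g(\Omega_\beta^{\mathcal A^\perp})=\Omega_\beta^{g(\mathcal A^\perp)}=\Omega_\beta^{\tau(\mathcal A)}$. So assume $\tau=\sigma_\perp$; recall the standing hypothesis $m=2\ell$, so $\sigma_\perp$ maps $\G_{\ell,m}$ onto itself. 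Fix a complete flag $\mathcal{C}\colon \{0\}=C_0\subsetneq C_1\subsetneq\cdots\subsetneq C_m=V$ with $C_{a_i}=A_i$, and for $W\in\G_{\ell,m}$ let $j(W)=(j_1<\cdots<j_\ell)$, $j_i=\min\{d:\dim(W\cap C_d)\ge i\}$, be its jump sequence relative to $\mathcal{C}$; this is an $\ell$-subset of $\{1,\dots,m\}$ because $d\mapsto\dim(W\cap C_d)$ climbs from $0$ to $\ell$ by unit steps. Directly from the definitions,
$$W\in\Omega_\alpha^{\mathcal A}\iff \dim(W\cap C_{a_i})\ge i\ \ \forall i\iff j_i(W)\le a_i\ \ \forall i.$$
Reindexing $\mathcal{C}^\perp$ as the complete flag $\mathcal{C}'$ with $C'_e:=C_{m-e}^\perp$, one has $C'_{m-a_i}=A_i^\perp$; the same description then holds for $\Omega_\beta^{\mathcal A^\perp}$ in terms of the jump sequence $j'$ relative to $\mathcal{C}'$, the $\beta$-flag being the subspaces of $\mathcal{C}'$ of the dimensions in $\beta$. (That this genuinely represents $\Omega_\beta^{\mathcal A^\perp}$, and does not depend on the completion, is the earlier lemma saying a Schubert variety depends on its flag only through the subspaces indexed by its nonconsecutive set, once one checks $\beta_{nc}=\{m-a:a\in\alpha_{nc}\}$ so that the essential subspaces do come from $\mathcal A^\perp$.)

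\emph{The key computation.} For $W\in\G_{\ell,m}$, using $\dim W=\ell$, $\dim X^\perp=m-\dim X$ and $(X\cap Y)^\perp=X^\perp+Y^\perp$,
$$\dim\bigl(W^\perp\cap C'_e\bigr)=\dim\bigl((W+C_{m-e})^\perp\bigr)=m-\dim(W+C_{m-e})=(e-\ell)+\dim(W\cap C_{m-e}).$$
Hence $\dim(W^\perp\cap C'_e)-\dim(W^\perp\cap C'_{e-1})=1-\bigl(\dim(W\cap C_{m-e+1})-\dim(W\cap C_{m-e})\bigr)$, which equals $1$ when $m+1-e\notin j(W)$ and $0$ otherwise. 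Therefore the jump sequence of $W^\perp=\sigma_\perp(W)$ relative to $\mathcal{C}'$ is
$$j'(W^\perp)=\{e\in\{1,\dots,m\}:m+1-e\notin j(W)\}=\{\,m+1-d:d\in\{1,\dots,m\}\setminus j(W)\,\}=:j(W)^{\ast}.$$

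\emph{Conclusion.} Put $\beta:=\alpha^{\ast}=\{m+1-j:j\in\{1,\dots,m\}\setminus\alpha\}$, which is the $\ell$-subset $\beta$ of the statement (reading $j\notin\alpha$ as $j\in\{1,\dots,m\}\setminus\alpha$). Combining the last two displays,
$$\sigma_\perp(W)\in\Omega_\beta^{\mathcal A^\perp}\iff j'(W^\perp)_p\le(\alpha^{\ast})_p\ \ \forall p\iff \bigl(j(W)^{\ast}\bigr)_p\le(\alpha^{\ast})_p\ \ \forall p.$$
The remaining point is that $S\mapsto S^{\ast}$ is an automorphism of the componentwise (Bruhat) order on $\ell$-subsets of $\{1,\dots,m\}$: it is the composition of complementation $S\mapsto\{1,\dots,m\}\setminus S$ with the reversal $d\mapsto m+1-d$, each of which reverses that order, so the composite preserves it (and is an involution). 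Applying this with $S=j(W)$, the last condition reads $j_i(W)\le a_i$ for all $i$, i.e.\ $W\in\Omega_\alpha^{\mathcal A}$. Since $\sigma_\perp$ is a bijection of $\G_{\ell,m}$, this yields $\sigma_\perp(\Omega_\alpha^{\mathcal A})=\Omega_\beta^{\mathcal A^\perp}$, and the reduction finishes the proof.

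\emph{Main obstacle.} The conceptual heart is the identity $j'(\sigma_\perp(W))=j(W)^{\ast}$ — that a contravariant automorphism dualizes jump sequences — which is short once the language of complete flags and jump sequences is available (presumably why those definitions precede the lemma). The step demanding the most care is bookkeeping rather than ideas: verifying that the system of inequalities produced by $\sigma_\perp$ really is the Schubert variety attached to $\mathcal A^\perp$ with index $\beta=\alpha^{\ast}$ — in particular checking $\beta_{nc}=\{m-a:a\in\alpha_{nc}\}$ and invoking the redundant-conditions lemma so that $\Omega_\beta^{\mathcal A^\perp}$ is unambiguous even though $\mathcal A^\perp$ is not literally a $\beta$-flag.
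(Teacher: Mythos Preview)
Your argument is correct. Both your proof and the paper's rest on the same core: extend $\mathcal{A}$ to a complete flag $\mathcal{C}$, then use $\dim\tau(X)=m-\dim X$ together with the dimension formula $\dim(X\cap Y)+\dim(X+Y)=\dim X+\dim Y$ to convert the Schubert conditions for $\mathcal{C}$ into Schubert conditions for $\tau(\mathcal{C})$. The organization, however, differs. The paper works with a general contravariant $\tau$ throughout and tracks the threshold function $w_s$ (where $\dim(W\cap C_s)\ge w_s$), computing how $\tau$ transforms $w_s$ into a new threshold $n_r$ and reading off $\beta$ from the jump positions of $n_r$. You instead first reduce to $\tau=\sigma_\perp$ via Chow and Lemma~\ref{lem:AutSchubert}, then phrase everything in terms of jump sequences and recognize the map $S\mapsto S^\ast=\{m+1-d:d\notin S\}$ as a Bruhat-order automorphism. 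Your packaging is cleaner and more conceptual, and your explicit verification that $\beta_{nc}=\{m-a:a\in\alpha_{nc}\}$ (so that $\Omega_\beta^{\mathcal{A}^\perp}$ is well defined even though $\mathcal{A}^\perp$ is an $(m-\alpha)$-flag rather than a $\beta$-flag) fills a gap the paper leaves implicit. The paper's version has the minor advantage of not needing the factorization $\tau=g\circ\sigma_\perp$.
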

\begin{proof}

Let $\mathcal{A} = A_1 \subsetneq A_2 \subsetneq A_3 \subsetneq \cdots \subsetneq A_\ell \subseteq V$ be an $\alpha$--flag. Suppose $\mathcal{C} = C_0 = \subsetneq C_1 \subsetneq C_1 \subsetneq \cdots \subsetneq C_m$ is a complete flag with $\mathcal{A}$ as a subflag.

The Schubert conditions $\dim A_i \cap W \geq i$ can be extended to the subspaces of $\mathcal{C}$ Simply note that for $A_i \subseteq C_s \subsetneq A_{i+1}$ the condition $\dim C_s \cap W \geq i$ holds. Now we have the following Schubert conditions on the complete flag $\mathcal{C}$. $$ \dim C_s \cap W \geq i, \makebox{ for } a_i \leq s < a_{i+1}. $$ 

Thus given $\alpha$, an $\alpha$--flag $\mathcal{A}$ and a complete flag $\mathcal{C}$ containing $\mathcal{A}$ we may rewrite the conditions as follows: Let $w_0, w_1, w_2, \ldots, w_m$ be a sequence of integers such that $w_s = i$ for $a_i \leq s <a_{i+1}$. Then 
$$ \dim C_s \cap W \geq w_s.$$ Note that $w_i$ increases by $1$ only on the positions corresponding to $\alpha$. That is $\alpha = \{ s \ | n_s = n_{s-1}-1 \}$.

Now we shall apply $\tau$ to $ \dim C_s \cap W \geq w_s$. The Schubert conditions become $$ \dim \tau(C_s \cap W) \leq m - w_s.$$ As $\tau(C_s \cap W) $ is the vector space spanned by $\tau(C_s)$ and $\tau(W)$, we have the conditions
$$ \dim \tau(C_s ) + \tau(W) \leq m - w_s.$$  This is equivalent to $$ \dim \tau(C_s ) + \dim \tau(W) -  \dim \tau(C_s) \cap \tau(W) \leq m - w_s.$$ 

 In order to simply our notation we shall set $r = m+1-s$, $D_r = \tau(C_{m+1-s})$, $\tau(W) = U$, and $u_{r} = m - w_{s}$. Note that $\tau$ maps $\G_{\ell, m}$ to itself so $U$ also represents any element of the Grassmannian. The Schubert conditions become

$$ \dim D_r + \dim U -  \dim D_r \cap U \leq u_r.$$ 
As $\dim D_r = r$, $\dim U = \ell$ we rearrange the terms and obtain: 
$$ \dim D_r \cap U \geq r + \ell -  u_r.$$

Let $n_r = r + \ell -  u_r$. Now we determine $\beta = \{ j \in [m] \ | \ n_{j+1} = n_j +1 \}$.  Recall that these are the entries where $\dim D_{j} \cap U > \dim D_{j-1} \cap U$.  In this case there are some stringent conditions on$\beta$ from the equality $\tau(\Omega_\alpha^{\mathcal{A}}) =  \Omega_\beta^{\tau(\mathcal{A})}$ and Proposition \ref{prop:KL}.

Suppose $n_r = n_{r+1}$.  In this case $ r + \ell -  u_r =  r+1 + \ell -  u_{r+1}$. From the definition of $r$ and $u_r$ we have that $m+1 -s + \ell - (m - w_{m+1-s}) = m-s + \ell - (m - w_{m-s}).$ Therefore increases in $\dim D_r \cap U$ do not occur for $w_{m-s} +1 = w_{m-s+1}$. Likewise $\dim D_r \cap U$ increases .when  $w_{m-s} = w_{m-s+1}$. Therefore the set $\{ j \in [m] \ | \ n_{j+1} = n_j +1 \} =  \{ m+1 - i \ | \ i \not \in \alpha \}$. Thus $\tau(\Omega_\alpha^{\mathcal{A}}) =  \Omega_\beta^{\tau(\mathcal{A})}$. \end{proof}

For a contravariant mapping $\tau \in Aut(\G_{\ell, m})$ we find when $\tau(\Omega_\alpha^{\mathcal{A}}) = \Omega_{\alpha}^{\mathcal{A}}$.

\begin{lem}

Let $\tau \in Aut(\G_{\ell, m})$ be a contravariant mapping, $ \alpha = (a_1, a_2, \ldots, a_{\ell} )\subseteq [m]$ and $\mathcal{A}$ an $\alpha$--flag. Then  $\tau(\Omega_\alpha^{\mathcal{A}}) = \Omega_{\alpha}^{\mathcal{A}}$.
 if and only if $\alpha = \{ m+1-j \ | \ j \not\in \alpha \}$ and the sets $\{ \tau(A_i) \ | \ a_i \in \alpha_{nc} \} = \{ A_{i} \ | \ a_i \in \alpha_{nc}\}$ are equal.
\label{lem:Contravariant}
\end{lem}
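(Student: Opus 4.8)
The plan is to combine Lemma~\ref{lem:dualSchubertaction} with Theorem~\ref{thm:SchubertEqualityTheorem} in the same way that Theorem~\ref{thm:Variant} combined Lemma~\ref{lem:AutSchubert} with Theorem~\ref{thm:SchubertEqualityTheorem}. First I would invoke Lemma~\ref{lem:dualSchubertaction} to write $\tau(\Omega_\alpha^{\mathcal{A}}) = \Omega_\beta^{\tau(\mathcal{A})}$, where $\beta = \{m+1-j \mid j \not\in \alpha\}$. Then $\tau(\Omega_\alpha^{\mathcal{A}}) = \Omega_\alpha^{\mathcal{A}}$ becomes $\Omega_\beta^{\tau(\mathcal{A})} = \Omega_\alpha^{\mathcal{A}}$. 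By Proposition~\ref{prop:KL}, this equality forces $\alpha = \beta$, i.e. $\alpha = \{m+1-j \mid j\not\in\alpha\}$; this is the first of the two claimed conditions, and it is clearly necessary. Conversely, once $\alpha = \beta$ holds, the question reduces to comparing two $\alpha$--flags for the \emph{same} $\alpha$, namely $\tau(\mathcal{A})$ and $\mathcal{A}$, and Theorem~\ref{thm:SchubertEqualityTheorem} applies verbatim.

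The key step after reducing to $\alpha = \beta$ is to translate "the $i$-th subspace of the flag $\tau(\mathcal{A})$ equals $A_i$ for all $a_i \in \alpha_{nc}$" into the set-equality statement in the lemma. The subtlety is that $\tau$ is contravariant, so it reverses the flag: if $\mathcal{A} = A_1 \subsetneq \cdots \subsetneq A_\ell$ then $\tau(\mathcal{A}) = \tau(A_\ell) \subsetneq \cdots \subsetneq \tau(A_1)$, and the $i$-th term of $\tau(\mathcal{A})$ is $\tau(A_{\ell+1-i})$, a subspace of dimension $m - a_{\ell+1-i}$. So the ordered matching "$i$-th term of $\tau(\mathcal{A})$ $=$ $i$-th term of $\mathcal{A}$ for $a_i \in \alpha_{nc}$" demanded by Theorem~\ref{thm:SchubertEqualityTheorem} is not literally $\tau(A_i) = A_i$; rather it pairs $\tau(A_{\ell+1-i})$ with $A_i$ for indices $i$ with $a_i \in \alpha_{nc}$. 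One then checks, using $\alpha = \beta$, that the involution $j \mapsto m+1-j$ carries $\alpha_{nc}$ to $\alpha_{nc}$ (since the nonconsecutive positions of $\alpha$ correspond under duality to the nonconsecutive positions of $\beta = \alpha$), so the family $\{\tau(A_i) \mid a_i \in \alpha_{nc}\}$ and the family $\{A_i \mid a_i \in \alpha_{nc}\}$ are indexed by dual index sets of the same size; the ordered equality of flags is then equivalent to the unordered set equality $\{\tau(A_i)\mid a_i\in\alpha_{nc}\} = \{A_i \mid a_i\in\alpha_{nc}\}$. This bookkeeping — verifying that duality preserves $\alpha_{nc}$ and that the contravariant reversal turns the ordered flag condition into the stated set equality — is the main obstacle; everything else is a direct citation.

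Concretely, I would structure the proof as: (1) apply Lemma~\ref{lem:dualSchubertaction}; (2) apply Proposition~\ref{prop:KL} to get the necessity and sufficiency of $\alpha = \{m+1-j\mid j\not\in\alpha\}$ for the underlying index sets to agree; (3) assuming $\alpha = \beta$, apply Theorem~\ref{thm:SchubertEqualityTheorem} to the two $\alpha$--flags $\mathcal{A}$ and $\tau(\mathcal{A})$, obtaining that $\Omega_\alpha^{\tau(\mathcal{A})} = \Omega_\alpha^{\mathcal{A}}$ iff the $i$-th terms agree for $a_i\in\alpha_{nc}$; (4) unwind what "$i$-th term of $\tau(\mathcal{A})$" means via contravariance and verify $j\mapsto m+1-j$ fixes $\alpha_{nc}$ setwise, concluding that the ordered condition is exactly the asserted set equality $\{\tau(A_i)\mid a_i\in\alpha_{nc}\} = \{A_i\mid a_i\in\alpha_{nc}\}$. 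I expect step (4) to require the most care, since one must be precise about which term of the reversed flag is being matched with which term of $\mathcal{A}$, and confirm the dimensions are compatible (only possible because $\alpha = \beta$).
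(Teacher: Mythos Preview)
Your approach is essentially the same as the paper's: invoke Lemma~\ref{lem:dualSchubertaction}, then Proposition~\ref{prop:KL} to force $\alpha=\beta$, then Theorem~\ref{thm:SchubertEqualityTheorem} to compare the two $\alpha$--flags. The paper's proof is a two-sentence sketch citing exactly these three results in the same order; your step~(4) --- unwinding the contravariant reversal and checking that the ordered flag condition at nonconsecutive indices becomes the stated set equality --- is bookkeeping the paper omits entirely.
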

\begin{proof}  Lemma \ref{lem:dualSchubertaction} and Proposition \ref{prop:KL} imply that $\tau(\Omega_\alpha^{\mathcal{A}}) = \Omega_{\alpha}^{\tau(\mathcal{A})}$. Theorem \ref{thm:SchubertEqualityTheorem} states that $\Omega_{\alpha}^{\tau(\mathcal{A})} = \Omega_{\alpha}^{\mathcal{A}}$ if and only if they have the same subspaces in for the nonconsecutive indices. \end{proof}

\begin{thm} An automorphism of $\G_{\ell, m}$ is an automorphism of $\Omega_{\alpha}^{\mathcal{A}}$ if and only if it  maps the set $\{  A_i \ | \ a_i \in \alpha_{nc}\}$ to itself. \end{thm}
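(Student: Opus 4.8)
The plan is to derive the theorem from the two criteria already in hand---Theorem~\ref{thm:Variant} for dimension-preserving automorphisms and Lemma~\ref{lem:Contravariant} for dimension-reversing ones---after first splitting $Aut(\G_{\ell,m})$ into these two classes. By Chow's theorem every $\tau \in Aut(\G_{\ell,m})$ is a composition of maps of the form $\sigma_M$, $\sigma_\theta$, and (only when $\ell = m-\ell$) $\sigma_\perp$. Since $\sigma_M$ and $\sigma_\theta$ carry a $d$-dimensional subspace of $V$ to a $d$-dimensional one while $\sigma_\perp$ carries it to an $(m-d)$-dimensional one, every such $\tau$ acts coherently on all subspaces of $V$ and is therefore either \emph{covariant}, preserving the dimension of every subspace of $V$, or \emph{contravariant}, sending every $d$-dimensional subspace to an $(m-d)$-dimensional one; these two classes exhaust $Aut(\G_{\ell,m})$. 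Writing $S := \{A_i \mid a_i \in \alpha_{nc}\}$, it suffices to show, within each class, that $\tau(\Omega_\alpha^{\mathcal{A}}) = \Omega_\alpha^{\mathcal{A}}$ if and only if $\tau(S) = S$.

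For a covariant $\tau$, Theorem~\ref{thm:Variant} asserts that $\tau \in Aut(\Omega_\alpha^{\mathcal{A}})$ precisely when $\tau(A_i) = A_i$ for every $a_i \in \alpha_{nc}$, which immediately yields $\tau(S) = S$. Conversely, the members of $S$ have pairwise distinct dimensions---the distinct integers $a_i \in \alpha_{nc}$---so a covariant $\tau$ with $\tau(S) = S$ cannot send any member of $S$ to one of a different dimension and hence must fix each $A_i$ with $a_i \in \alpha_{nc}$ individually. Thus in the covariant case the two conditions are equivalent, which (for $\ell \neq m-\ell$) already settles the theorem.

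For a contravariant $\tau$ we are in the case $\ell = m-\ell$, and Lemma~\ref{lem:Contravariant} asserts that $\tau \in Aut(\Omega_\alpha^{\mathcal{A}})$ precisely when $\alpha = \{m+1-j \mid j \notin \alpha\}$ and $\tau(S) = S$. The forward implication of the theorem is then immediate. For the converse it remains to show that, for a contravariant $\tau$, the condition $\tau(S) = S$ already forces the numerical self-duality $\alpha = \{m+1-j \mid j \notin \alpha\}$. This is a combinatorial matter: a contravariant $\tau$ sends $S$ to a collection of subspaces of dimensions $\{\, m - a_i \mid a_i \in \alpha_{nc} \,\}$, so $\tau(S) = S$ forces $m - \alpha_{nc} = \alpha_{nc}$; and via the dictionary established in the proof of Lemma~\ref{lem:dualSchubertaction}---which records $\alpha$ as the set of positions at which the relevant rank sequence jumps, and recovers $\{m+1-j \mid j \notin \alpha\}$ from it through the substitution $r = m+1-s$---this symmetry of $\alpha_{nc}$ is to be converted into the symmetry of the jump pattern of $\alpha$, that is, into $\alpha = \{m+1-j \mid j \notin \alpha\}$. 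Substituting this back into Lemma~\ref{lem:Contravariant} completes the contravariant case, and combining the two cases proves the theorem.

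The step I expect to be the main obstacle is exactly this last translation in the contravariant case. One must verify that the correspondence between $\alpha$, its nonconsecutive set $\alpha_{nc}$, and the dual index set $\{m+1-j \mid j \notin \alpha\}$ behaves correctly at the extreme indices: the subspaces $\{0\}$ and $V$, and more generally any index of $\alpha$ at which the Schubert condition is vacuous (for example $a_\ell = m$). It is precisely at such boundary indices that ``being stabilized as a set by a contravariant automorphism'' and ``occurring among the non-redundant subspaces defining $\Omega_\alpha^{\mathcal{A}}$'' can come apart, so the crux is to make the numerical bookkeeping precise enough that $\tau(S) = S$ is seen to be equivalent---not merely necessary or sufficient in isolation---to $\tau \in Aut(\Omega_\alpha^{\mathcal{A}})$ when $\tau$ is contravariant.
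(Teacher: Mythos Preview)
Your overall plan---split $Aut(\G_{\ell,m})$ into covariant and contravariant maps and invoke Theorem~\ref{thm:Variant} and Lemma~\ref{lem:Contravariant} respectively---is exactly the paper's proof, which simply cites those two results. Your covariant paragraph is fine and in fact sharper than the paper, since you observe that ``$\tau(S)=S$'' and ``$\tau(A_i)=A_i$ for all $a_i\in\alpha_{nc}$'' coincide for dimension-preserving $\tau$.

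The difficulty you flag in the contravariant case is real, and your proposed resolution does not work. You claim that $\tau(S)=S$ for a contravariant $\tau$ forces $m-\alpha_{nc}=\alpha_{nc}$, and that this in turn forces the self-duality $\alpha=\{m+1-j\mid j\notin\alpha\}$ required by Lemma~\ref{lem:Contravariant}. The first step is correct, but the second is false: $\alpha_{nc}$ records only the right endpoints of the maximal runs of consecutive integers in $\alpha$, not their lengths, so the symmetry of $\alpha_{nc}$ does not pin down $\alpha$. Concretely, take $m=10$, $\ell=5$, $\alpha=(2,3,5,6,7)$. Then $\alpha_{nc}=\{3,7\}$ and $m-\alpha_{nc}=\{7,3\}=\alpha_{nc}$, yet $\{m+1-j\mid j\notin\alpha\}=\{1,2,3,7,10\}\neq\alpha$. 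Moreover, a contravariant $\tau$ with $\tau(S)=S$ genuinely exists here: one needs $\tau(A_2)=A_5$ and $\tau(A_5)=A_2$, i.e.\ an $M\in\GL{10}{\ff q}$ sending the pair $A_5^{\perp}\subset A_2^{\perp}$ (dimensions $3\subset 7$) to $A_2\subset A_5$, which $\GL{10}{\ff q}$ provides. By Lemma~\ref{lem:dualSchubertaction} and Proposition~\ref{prop:KL} this $\tau$ does \emph{not} fix $\Omega_\alpha^{\mathcal A}$, so the ``if'' direction of the theorem fails for this $\tau$. In short, the gap you anticipated is a genuine obstruction, not a bookkeeping issue: the paper's one-line proof glosses over it, and the statement as written cannot be repaired along the lines you propose without an additional hypothesis (e.g.\ restricting to $\alpha$ with $\alpha=\{m+1-j\mid j\notin\alpha\}$ in the contravariant case).
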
 \begin{proof} It follows from Teorem \ref{thm:Variant} and Lemma \ref{lem:Contravariant}.\end{proof}


\end{document}